\numberwithin{equation}{section}
 \theoremstyle{plain}
\newtheorem{thm}{Theorem}[section]
\newtheorem{lem}[thm]{Lemma}
\theoremstyle{definition}
\newtheorem{defn}[thm]{Definition}
\theoremstyle{remark}
\newcommand{\de}{{\mathbb D}}
\newcommand{\ce}{{\mathbb C}}
\begin{document}

\title[]{Briot-Bouquet differential subordinations of analytic functions
	involving the Mittag-Leffler function defined in Cardioid domain}

\author[Asena \c{C}etinkaya, \c{S}ahsene Alt\i nkaya]{Asena \c{C}etinkaya$^\ast$, \c{S}ahsene Alt\i nkaya}
\address{Department of Mathematics and Computer Science\\
\.{I}stanbul K\"{u}lt\"{u}r University, \.{I}stanbul, T\"{u}rkiye}
\email{asnfigen@hotmail.com}

\address{Department of Mathematics and Statistics,
	University of Turku, FI-20014 Turku, 
	Finland}
\email{sahsenealtinkaya@gmail.com}

\begin{abstract}
In this researh work, we establish a new subclass of analytic functions constructed by the Mittag-Leffler function that maps the open unit disc onto the region bounded by the Cardioid domain. Using  a technique introduced by Miller and Mocanu, we investigate several  Briot–Bouquet  differential subordinations  for this function class.
\end{abstract}

\subjclass[2010]{30C45, 30C50}
\keywords{Mittag-Leffler function, Briot-Bouquet differential subordination, Briot-Bouquet differential equation, Cardioid domain.\\
 $^\ast$Corresponding Author}

\maketitle

%.....................................................................................
\section{Introduction}
Denote by $\mathcal{A}$ the class of analytic functions $f$ of the form 
\begin{equation}
	f(z)=z+\sum_{n=1}^{\infty }a_{n+1}z^{n+1}  \label{aa}
\end{equation}
in the open unit disc ${\mathbb{D}}:=\{z\in {\mathbb{C}}:|z|<1\}$. Denote by $\Omega $ the class of Schwarz functions $\vartheta $ which are analytic in ${\mathbb{D}}$ with $\vartheta (0)=0$ and $|\vartheta
(z)|<1$. For analytic functions $f_{1}$ and $f_{2}$ in ${\mathbb{D}}$, we state that $f_{1}$ is subordinate to $f_{2}$, symbolized by $f_{1}\prec f_{2}$, if there exists a function $\vartheta$ in $\Omega $ fulfilling $f_{1}(z)=f_{2}(\vartheta (z))$. The
comprehensive details of subordination can be found in \cite{Duren}.

The convolution of functions $f_{1}(z)=z+\sum_{n=1}^{\infty }a_{n+1}z^{n+1}$ and $f_{2}(z)=z+\sum_{n=1}^{\infty }b_{n+1}z^{n+1}$ is expressed by 
\begin{eqnarray*}
	f_{1}(z)\ast f_{2}(z)=(f_{1}\ast f_{2})(z)=z+\sum_{n=1}^{\infty}a_{n+1}b_{n+1}z^{n+1}, \ \ \ (z\in {\mathbb{D}}).
\end{eqnarray*}

Ma and Minda \cite{minda94} investigated the class of analytic functions $h$ with  positive real part in ${\mathbb{D}}$ that map the disc ${\mathbb{D}}$ onto regions starlike with respect to $1$, symmetric with respect to the real axis and normalized by the conditions $h(0)=1$ and $h^{\prime }(0)>0$, and introduced the class
\begin{eqnarray*} 
	\mathcal{S}^\ast(h)=\left\{ f\in \mathcal{A}:\, \frac{zf^{\prime }(z)}{f(z)}
	\prec h(z)\right\}.
\end{eqnarray*}

For the case $h(z)=(1+Mz)/(1+Nz) \ (-1\leq N<M\leq 1)$, the family of Janowski starlike functions $\mathcal{S}^\ast[M,N]$ is obtained (see \cite{janowski73}). When $M=1-2\delta$  $(0\leq\delta<1)$ and $N=-1$, we have the family $\mathcal{S}^\ast(\delta)$ of
starlike functions of order $\delta$. Particularly, $\delta=0$ yields the
usual class $\mathcal{S}^\ast(0)=\mathcal{S}^\ast$ of starlike functions.
Recently, Ma--Minda type families of starlike functions have been
investigated by Mendiretta \textit{et al.} \cite{Mendi}, Sok\'{o}l and
Stankiewicz \cite{b} and Cho \textit{et al.} \cite{Cho}. In \cite{Sar},
Sharma \textit{et al.} consider the subfamily $\mathcal{S}^\ast_{c}$ of
Ma--Minda classes $\mathcal{S}^*(h)$ which are endowed with the analytic
function $h_{c}(z)=1+4z/3+2z^2/3$ that is univalent, starlike with respect
to 1 and maps ${\mathbb{D}}$ onto a region bounded by the cardioid 
$(9u^2+9v^2-18u+5)^2-16(9u^2+9v^2-6u+1)=0.$
\begin{figure}[!hbt]
	\centering
	\rotatebox{0}{\scalebox{0.3}{\includegraphics{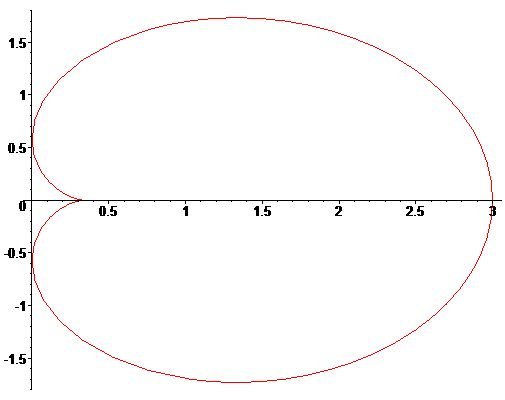}}}
	\par
	\begin{center}
		\textbf{Figure 1.} {Image domain of ${\mathbb{D}}$ under the
			function $h_{c}$}
	\end{center}
\end{figure}

In 1903, a Swedish mathematician Gosta Magnus Mittag-
Leffler  discovered a celebrated function $E_{\alpha }$  $(\alpha,z \in {\mathbb{C}},\ \Re(\alpha )>0)$ defined by
\begin{eqnarray*}
	E_{\alpha }(z)=\sum_{n=0}^{\infty }\frac{z^{n}}{\Gamma (\alpha n+1)},
\end{eqnarray*}
where $\Gamma(.)$ represents the Gamma function (see \cite{Mittag}). It is observed that the Mittag-Leffler function $E_{\alpha }$ is
an entire function of $z$ with order $[\Re(\alpha)]^{-1}$. Later, Wiman \cite{Wiman} studied the generalized Mittag-Leffler function $E_{\alpha ,\beta }$ $(\alpha ,\beta,z \in {\mathbb{C}},\ \Re(\alpha )>0,\ \Re
(\beta )>0)$ with two parameters  defined  by
\begin{eqnarray*}
	E_{\alpha ,\beta }(z)=\sum_{n=0}^{\infty }\frac{z^{n}}{\Gamma (\alpha n+\beta )}.
\end{eqnarray*}

Since then, the function $E_{\alpha ,\beta }$ and its several generalizations
arose in the solution of fractional differential equations, super diffusive transport problems, Levy flights and in some other problems. For more details, one may refer
to \cite{Bansal,Gor} and references given therein.

In \cite{Pra}, Prabhakar defined a new form of the Mittag-Leffler function with three parameters and introduced the function  $E_{\alpha ,\beta }^{\gamma }$ $\big(\alpha ,\beta ,\gamma, z \in {\mathbb{C}},\ \Re(\alpha )>0,\ \Re(\beta
)>0,\ \Re(\gamma )>0\big)$ given by
\begin{equation}
	E_{\alpha ,\beta }^{\gamma }(z)=\sum_{n=0}^{\infty }\frac{(\gamma )_{n}\
		z^{n}}{\Gamma (\alpha n+\beta )\ n!},  \label{eq:EABG}
\end{equation}
where 
\begin{eqnarray*}
	\left( \gamma \right) _{n}=\frac{\Gamma (\gamma +n)}{\Gamma (\gamma )}
	=\left\{ 
	\begin{array}{ll}
		1, & n=0 \\ 
		&  \\ 
		\gamma (\gamma +1)\ldots (\gamma +n-1),\ \ \  & n\in 
		\mathbb{N}
		
	\end{array}
	\right. 
\end{eqnarray*}
is the Pochhammer symbol. We remark that 
\begin{eqnarray*}
	E_{\alpha ,1}^{1}(z)=:E_{\alpha }(z),\ \ \ \ E_{\alpha ,\beta
	}^{1}(z)=:E_{\alpha ,\beta }(z).
\end{eqnarray*}

The function given by (\ref{eq:EABG}) is not an element of $\mathcal{A}$. Hence, we  consider the normalization of the function $E^{\gamma}_{\alpha,\beta}$ via
\begin{eqnarray*}
	\mathbb{E}^{\gamma}_{\alpha,\beta}(z)=\Gamma(\beta)zE^{\gamma}_{\alpha,
		\beta}(z)=z+\sum_{n=1}^\infty\frac{\Gamma(\beta)(\gamma)_n}{\Gamma(\alpha
		n+\beta)\ n!}z^{n+1}.
\end{eqnarray*}

Corresponding to the function $\mathbb{E}^{\gamma}_{\alpha,\beta}$ and the function $f$ of the form (\ref{aa}), consider the linear operator $\mathcal{E}^{\gamma}_{\alpha,\beta} :\mathcal{A}\rightarrow\mathcal{A}$ defined by 

\begin{eqnarray}\label{eq:sri-tom}
	\mathcal{E} _{\alpha ,\beta }^{\gamma }f(z)=\mathbb{E}_{\alpha ,\beta }^{\gamma }\ast
	f(z)=z+\sum\limits_{n=1}^{\infty }\frac{\Gamma \left( \beta \right) \left(
		\gamma \right) _{n}}{\Gamma \left( \alpha n+\beta \right) n!}a_{n+1}z^{n+1},\ \ (z\in\de)
\end{eqnarray}
where $\alpha, \beta, \gamma \in \ce$ with $\Re(\alpha)>0, \Re(\beta)>0, \Re(\gamma)>0$. This operator was introduced by Raducanu \cite{Radu} and studied for the case real-valued $\alpha, \beta, \gamma$ with  $\alpha>0, \beta>0, \gamma>0$.
\noindent From (\ref{eq:sri-tom}), it is easily verified that
\begin{equation}  \label{eq:recurrence}
	z\big(\mathcal{E}^{\gamma}_{\alpha,\beta}f(z)\big)^{\prime }=(1-\gamma)%
	\mathcal{E}^{\gamma}_{\alpha,\beta}f(z)+\gamma\mathcal{E}^{\gamma+1}_{%
		\alpha,\beta}f(z),
\end{equation}
and 
\begin{equation}  \label{eq:recurrence2}
	\alpha z\big(\mathcal{E}^{\gamma}_{\alpha,\beta+1}f(z)\big)^{\prime }=\beta%
	\mathcal{E}^{\gamma}_{\alpha,\beta}f(z)+(\alpha-\beta)\mathcal{E}%
	^{\gamma}_{\alpha,\beta+1}f(z).
\end{equation}

In view of the linear operator $\mathcal{E}^{\gamma}_{\alpha,\beta}$, we define a new function class  which maps the disc ${\mathbb{D}}$ onto a domain bounded by
cardioid.

\begin{defn}
	\label{def1.2} A function $f\in\mathcal{A}$ given by (\ref{aa}) is in the class $\mathcal{S}
	^{\gamma}_{\alpha,\beta}(h_c)$ if it satisfies the condition
	\begin{equation}  \label{eq:def1}
		\frac{z\big(\mathcal{E}^{\gamma}_{\alpha,\beta}f(z)\big)^{\prime }}{\mathcal{%
				E}^{\gamma}_{\alpha,\beta}f(z)}\prec h_c(z)=1+\frac{4z}{3}+\frac{2z^2}{3},
	\end{equation}
	where $\mathcal{E}^{\gamma}_{\alpha,\beta}$ is given by %
	(\ref{eq:sri-tom}) and $z\in{\mathbb{D}}$.
\end{defn}

This paper deals with some applications of the Briot-Bouquet differential
subordination 
\begin{equation}  \label{eq:qz}
	\phi(z)+\frac{z\phi^{\prime }(z)}{\eta \phi(z)+\mu}\prec h(z), \ \ \ (\eta,
	\mu\in{\mathbb{C}}, \eta\neq0)
\end{equation}
with $\phi(0)=h(0)=1$. If the univalent function $q(z)=1+q_1z+q_2z^2+...$ has
the feature $\phi\prec q$ for all analytic functions $\phi$, then it
is called a dominant of (\ref{eq:qz}). If $\tilde{q}\prec q$ for all dominants  $q$, then a dominant  $\tilde{q}$ is called
the best dominant. By
using a technique based upon Briot-Bouquet differential subordination which
was investigated by Miller and Mocanu \cite{Miller}, we establish several
subordination properties for the function class defined by the Mittag-Leffler
function bounded by the cardioid domain. Thus, we shall express the essential lemmas.

\begin{lem}
	\cite{Hal} \label{lem2} Assume that $h$ $(h(0)=1)$ is a convex univalent function in ${\mathbb{D}}$. Also, assume that $\phi$ of the form $\phi(z)=1+c_1z+c_2z^2+...$ $(\phi(0)=1)$
	is an analytic function in ${\mathbb{D}}$. If  
	\begin{eqnarray*}
		\phi(z)+\frac{1}{\mu}z\phi^{\prime }(z)\prec h(z), \ \ \ (\mu\neq0,\ \Re \mu\geq0)  
	\end{eqnarray*}
	then  
	\begin{equation}  \label{eq:gz}
		\phi(z)\prec \tilde{h}(z)= \frac{\mu}{z^{\mu}}\int_0^z t^{\mu-1}h(t)dt\prec
		h(z)
	\end{equation}
	and $\tilde{h}$ is the best dominant of (\ref{eq:gz}).
\end{lem}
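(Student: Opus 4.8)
The plan is to treat this as the linear instance of the Briot--Bouquet differential subordination theory, arguing in four steps. First I would verify that $\tilde{h}$ is a legitimate comparison function: writing $h(z)=1+\sum_{n\geq1}c_nz^n$, the function $\tilde{h}(z)=\frac{\mu}{z^{\mu}}\int_0^z t^{\mu-1}h(t)\,dt=\mu\int_0^1 s^{\mu-1}h(sz)\,ds$ is analytic in $\de$ with $\tilde{h}(0)=1$ and Taylor expansion $\tilde{h}(z)=1+\sum_{n\geq1}\frac{\mu}{\mu+n}\,c_nz^n$; the multipliers $\mu/(\mu+n)$ are well defined and bounded by $|\mu|$ since $\RE\mu\geq0$ forces $|\mu+n|\geq n$, so the series converges in $\de$ (for $\RE\mu=0$ this power series is the meaning of $\tilde{h}$, the integral form being used when $\RE\mu>0$). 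Differentiating $z^{\mu}\tilde{h}(z)=\mu\int_0^z t^{\mu-1}h(t)\,dt$ and dividing by $\mu z^{\mu-1}$ gives the first-order linear Briot--Bouquet equation
\[
\tilde{h}(z)+\frac{1}{\mu}\,z\tilde{h}'(z)=h(z),\qquad \tilde{h}(0)=1,
\]
and comparing coefficients shows that $\tilde{h}$ is its unique analytic solution.

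Next I would establish $\tilde{h}\prec h$ together with the univalence of $\tilde{h}$. Since $h$ is convex and $h\prec h$ trivially, the Hallenbeck--Ruscheweyh theorem on subordination by convex functions --- which for $\RE\mu\geq0$ asserts that, for convex $h$, $f\prec h$ implies $\frac{\mu}{z^{\mu}}\int_0^z t^{\mu-1}f(t)\,dt\prec h$ --- yields $\tilde{h}\prec h$. Moreover this integral operator carries convex functions to convex functions, so $\tilde{h}$ is convex, hence univalent, and $\tilde{h}(\de)\subseteq h(\de)$. Thus $\tilde{h}$ is a legitimate univalent dominant.

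The heart of the proof is then to show $\phi\prec\tilde{h}$ by the technique of Miller and Mocanu \cite{Miller}. Put $g(z)=\phi(z)+\frac{1}{\mu}z\phi'(z)$; by hypothesis $g\prec h$, hence $g(0)=1=h(0)$ and $g(\de)\subseteq h(\de)$. Suppose, for contradiction, that $\phi\not\prec\tilde{h}$. Because $\phi(0)=\tilde{h}(0)=1$ and $\tilde{h}$ is univalent, the Jack--Miller--Mocanu lemma (whose boundary hypotheses on $\tilde{h}$ are met after the customary approximation of $h$ by its dilates $h(\rho z)$, $\rho<1$, and a passage $\rho\to1^-$) produces $z_0\in\de$, $\zeta_0\in\partial\de$ and a number $m\geq1$ with $\phi(z_0)=\tilde{h}(\zeta_0)$ and $z_0\phi'(z_0)=m\,\zeta_0\tilde{h}'(\zeta_0)$. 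Using the first step,
\[
g(z_0)=\phi(z_0)+\frac{1}{\mu}z_0\phi'(z_0)=\tilde{h}(\zeta_0)+\frac{m}{\mu}\,\zeta_0\tilde{h}'(\zeta_0)=a+mb,
\]
where $a:=\tilde{h}(\zeta_0)\in\overline{h(\de)}$, $b:=\frac{1}{\mu}\zeta_0\tilde{h}'(\zeta_0)$ and $a+b=h(\zeta_0)\in\partial h(\de)$. Since $h(\de)$ is convex it has a supporting line at $a+b$: there is a unit vector $\nu$ with $\RE\big(\bar{\nu}(w-(a+b))\big)\leq0$ for all $w\in\overline{h(\de)}$, while $h(\de)$ lies in the open half-plane $\RE(\bar{\nu}(w-(a+b)))<0$. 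Taking $w=a$ gives $\RE(\bar{\nu}b)\geq0$, so that $\RE\big(\bar{\nu}(a+mb-(a+b))\big)=(m-1)\RE(\bar{\nu}b)\geq0$ for every $m\geq1$; hence $a+mb$ lies outside that open half-plane, i.e. $a+mb\notin h(\de)$, contradicting $g(z_0)\in h(\de)$. Therefore $\phi\prec\tilde{h}$. Finally, $\tilde{h}$ itself satisfies $\tilde{h}(z)+\frac{1}{\mu}z\tilde{h}'(z)=h(z)\prec h(z)$, so applying the defining property of a dominant to $\phi=\tilde{h}$ gives $\tilde{h}\prec q$ for every dominant $q$ of \eqref{eq:gz}; thus $\tilde{h}$ is the best dominant.

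The hard part is the admissibility step inside the third paragraph --- the geometric fact that, starting from a boundary point of a convex domain, moving farther along the direction $\frac{1}{\mu}\zeta_0\tilde{h}'(\zeta_0)$ cannot re-enter the domain --- together with the control of the boundary behaviour of $\tilde{h}$ (and of $h$ when $h(\de)$ is unbounded) needed to invoke the Jack--Miller--Mocanu lemma; both are standard but delicate, and are resolved by the usual dilation-and-limit device.
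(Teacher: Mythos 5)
The paper offers no proof of this lemma at all: it is imported verbatim from Hallenbeck--Ruscheweyh \cite{Hal} (in the sharpened form appearing in Miller--Mocanu), so there is no ``paper's proof'' to match yours against. Your reconstruction is essentially the standard argument and is correct: the identity $\tilde{h}+\frac{1}{\mu}z\tilde{h}'=h$, the admissibility step showing that $a+mb$ with $m\geq 1$ cannot re-enter the convex domain $h(\de)$ once $a+b\in\partial h(\de)$ and $a\in\overline{h(\de)}$, and the observation that $\tilde{h}$ itself solves the subordination with equality (hence is dominated by every dominant) are exactly the three pillars of the Hallenbeck--Ruscheweyh/Miller--Mocanu proof. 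One caveat: in your second step you invoke ``the Hallenbeck--Ruscheweyh theorem on subordination by convex functions'' to conclude $\tilde{h}\prec h$, but that theorem \emph{is} (the non-sharp half of) the lemma you are proving, so as written this step is close to circular. It is easily repaired: either run your own admissibility argument a second time with target $h$ in place of $\tilde{h}$ (at a boundary point $h(\zeta_0)$ of the convex domain the outward normal is $\zeta_0 h'(\zeta_0)/|\zeta_0 h'(\zeta_0)|$, and $\Re(1/\mu)\geq 0$ shows $h(\zeta_0)+\frac{m}{\mu}\zeta_0 h'(\zeta_0)\notin h(\de)$), which yields $\phi\prec h$ for every solution and in particular $\tilde{h}\prec h$; or, for real $\mu>0$, note that $\tilde{h}(z)=\mu\int_0^1 s^{\mu-1}h(sz)\,ds$ is an average of points of the compact convex set $h(\overline{|w|\leq|z|})\subset h(\de)$ and hence lies in $h(\de)$. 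Similarly, the convexity (univalence) of $\tilde{h}$ that you use to invoke the Jack--Miller--Mocanu lemma is true but nontrivial and should be either proved or cited precisely. With those two points tightened, your proof is a complete and faithful substitute for the citation.
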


\begin{lem}
	\cite{Miller} \label{lem3} Assume that $\eta$ ($\eta\neq0$) and $\mu$ are complex
	numbers, and $h$ $(h(0)=1)$ is a convex univalent function in ${\mathbb{D}}$ with $\Re (\eta h(z)+\mu)>0$. Also, assume that $\phi$ is analytic in ${\mathbb{D}}$ and
	satisfy (\ref{eq:qz}). If the Briot-Bouquet differential equation  
	\begin{equation}  \label{eq:q(z)}
		q(z)+\frac{zq^{\prime }(z)}{\eta q(z)+\mu}= h(z), \ \ (q(0)=1)
	\end{equation}
	has a univalent solution $q$, then  
	\begin{eqnarray*}
		\phi(z)\prec q(z)\prec h(z)
	\end{eqnarray*}
	and $q$ is the best dominant of (\ref{eq:qz}). The solution of (\ref{eq:q(z)}) is  
	\begin{equation}  \label{eq:solutionlem}
		q(z)=z^\mu [H(z)]^{\eta}\bigg(\eta\int_0^z[H(t)]^{\eta}t^{\mu-1}dt\bigg)%
		^{-1}-\mu/\eta,
	\end{equation}
	where  
	\begin{eqnarray*}
		H(z)=z\exp\int_0^z\frac{h(t)-1}{t}dt.
	\end{eqnarray*}
\end{lem}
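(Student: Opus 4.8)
I would split the proof into two independent parts: verifying the closed form (\ref{eq:solutionlem}) for the solution of the Briot--Bouquet differential equation (\ref{eq:q(z)}), and then establishing the subordination chain $\phi\prec q\prec h$ together with the extremality of $q$. The closed form I would confirm by direct differentiation, working through the auxiliary function $H$. Logarithmic differentiation of $H(z)=z\exp\int_{0}^{z}\frac{h(t)-1}{t}\,dt$ gives the identity $zH'(z)/H(z)=h(z)$, together with $H(0)=0$ and $H'(0)=1$. Set $G(z)=\eta\int_{0}^{z}[H(t)]^{\eta}t^{\mu-1}\,dt$; then (\ref{eq:solutionlem}) asserts exactly that $p(z):=\eta q(z)+\mu=zG'(z)/G(z)$. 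Differentiating $\log p=\log z+\log G'-\log G$ yields $p+zp'/p=1+zG''/G'$, and since $\log G'(z)=\log\eta+\eta\log H(z)+(\mu-1)\log z$ one gets $zG''(z)/G'(z)=\eta\,zH'(z)/H(z)+\mu-1=\eta h(z)+\mu-1$; hence $p+zp'/p=\eta h+\mu$, which is (\ref{eq:q(z)}) after dividing by $\eta$ and substituting $q=(p-\mu)/\eta$. The normalization $q(0)=1$ follows from the leading behaviour $G(z)\sim\frac{\eta}{\eta+\mu}z^{\eta+\mu}$ near the origin ($\Re(\eta+\mu)>0$ because $\Re(\eta h(0)+\mu)>0$). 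This part is routine bookkeeping.

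For the subordination I would use the method of admissible functions of Miller and Mocanu, whose engine is the following contact-point lemma: if $g$ is univalent with $g(0)=p_{0}$ and $p$ is analytic with $p(0)=p_{0}$ but $p\not\prec g$, then there exist $z_{0}\in\mathbb{D}$, $\zeta_{0}\in\partial\mathbb{D}$ and $m\geq1$ with $p(z_{0})=g(\zeta_{0})$ and $z_{0}p'(z_{0})=m\zeta_{0}g'(\zeta_{0})$. First I would establish $q\prec h$. Putting $\psi(r,s)=r+\frac{s}{\eta r+\mu}$, equation (\ref{eq:q(z)}) reads $\psi(q(z),zq'(z))=h(z)$. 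If $q\not\prec h$, the lemma (with $g=h$, $p=q$) supplies such $z_{0},\zeta_{0},m$, and evaluating (\ref{eq:q(z)}) at $z_{0}$ gives $h(z_{0})=w_{0}$, where $w_{0}:=h(\zeta_{0})+\frac{m\,\zeta_{0}h'(\zeta_{0})}{\eta h(\zeta_{0})+\mu}$. Since $h$ is convex, $h(\mathbb{D})$ lies in the open half-plane cut off by the supporting line at the boundary point $h(\zeta_{0})$, whose outward normal is a positive multiple of $\zeta_{0}h'(\zeta_{0})$; testing $w_{0}-h(\zeta_{0})=\frac{m\,\zeta_{0}h'(\zeta_{0})}{\eta h(\zeta_{0})+\mu}$ against this normal gives $\Re\bigl[\overline{\zeta_{0}h'(\zeta_{0})}\,(w_{0}-h(\zeta_{0}))\bigr]=m\,|\zeta_{0}h'(\zeta_{0})|^{2}\,\Re\bigl[(\eta h(\zeta_{0})+\mu)^{-1}\bigr]\geq0$, because $\Re(\eta h(\zeta_{0})+\mu)\geq0$. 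Hence $w_{0}\notin h(\mathbb{D})$, contradicting $h(z_{0})\in h(\mathbb{D})$; so $q\prec h$.

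Granting $q\prec h$, I would deduce $\phi\prec q$ from the same lemma, now with the univalent dominant $g=q$ and $p=\phi$: if $\phi\not\prec q$, there are $z_{0},\zeta_{0},m$ with $\phi(z_{0})=q(\zeta_{0})$ and $z_{0}\phi'(z_{0})=m\zeta_{0}q'(\zeta_{0})$, so, using (\ref{eq:q(z)}) at $\zeta_{0}$,
\[
\psi\bigl(\phi(z_{0}),z_{0}\phi'(z_{0})\bigr)=q(\zeta_{0})+\frac{m\,\zeta_{0}q'(\zeta_{0})}{\eta q(\zeta_{0})+\mu}=h(\zeta_{0})+(m-1)\bigl(h(\zeta_{0})-q(\zeta_{0})\bigr).
\]
Because $q(\zeta_{0})\in\overline{q(\mathbb{D})}\subseteq\overline{h(\mathbb{D})}$, $h(\zeta_{0})\in\partial h(\mathbb{D})$, and $h(\mathbb{D})$ is convex, this point never lies in $h(\mathbb{D})$: it is $h(\zeta_{0})$ when $m=1$, and for $m>1$ its membership in $h(\mathbb{D})$ would exhibit $h(\zeta_{0})$ as a convex combination, with positive weight, of an interior point of $h(\mathbb{D})$ and the closure point $q(\zeta_{0})$, forcing $h(\zeta_{0})\in h(\mathbb{D})$. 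This contradicts $\psi(\phi(z),z\phi'(z))\prec h$, so $\phi\prec q\prec h$. Finally $q$ is the best dominant, since $q$ itself satisfies (\ref{eq:qz}) (its left-hand side equals $h$, and $h\prec h$), whence $q\prec Q$ for every dominant $Q$ of (\ref{eq:qz}). The main obstacle is the contact-point lemma and the limiting it forces: $q$ and $h$ need not extend continuously to $\partial\mathbb{D}$, so the contact-point construction, the supporting-line estimate, and the inequality $\Re(\eta h(\zeta_{0})+\mu)\geq0$ must all be carried out on $r\mathbb{D}$ and then passed to the limit $r\to1^{-}$.
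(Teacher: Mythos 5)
The paper does not prove this lemma: it is quoted verbatim from Miller and Mocanu \cite{Miller} and used as a black box, so there is no in-paper argument to compare yours against. Judged on its own terms, your reconstruction is correct and is essentially the original Miller--Mocanu argument. The verification of the closed form is sound: with $G(z)=\eta\int_0^z[H(t)]^{\eta}t^{\mu-1}\,dt$ one indeed has $\eta q+\mu=zG'/G$, the identity $zH'/H=h$ turns $p+zp'/p=1+zG''/G'$ into $\eta h+\mu$, and the normalization $q(0)=1$ follows from $G(z)\sim\frac{\eta}{\eta+\mu}z^{\eta+\mu}$, which uses $\Re(\eta+\mu)>0$ exactly as you say. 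The two applications of the contact-point lemma are also the standard ones: the supporting-line computation $\Re\bigl[\overline{\zeta_0h'(\zeta_0)}\,(w_0-h(\zeta_0))\bigr]=m|\zeta_0h'(\zeta_0)|^2\Re\bigl[(\eta h(\zeta_0)+\mu)^{-1}\bigr]\ge 0$ gives $q\prec h$, the convex-combination argument gives $\phi\prec q$, and best dominance follows because $q$ itself satisfies the subordination (\ref{eq:qz}). You are also right to flag the only genuinely delicate point, namely that $h$ and $q$ need not extend to $\partial\mathbb{D}$, so the contact-point construction and the boundary inequalities must be run on $r\mathbb{D}$ and passed to the limit $r\to1^{-}$; this is precisely how the original paper handles it. I would only add that in the second application you should note explicitly that the hypothesis that $q$ is \emph{univalent} is what licenses using it as the dominant $g$ in the contact-point lemma, but this is a presentational remark, not a gap.
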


\section{Subordination properties for the operator $\mathcal{E}^{\protect\gamma}_{\protect\alpha,\protect\beta}$}
In this section, we examine differential subordination properties for the linear
operator $\mathcal{E}^{\gamma}_{\alpha,\beta}$. Throughout this paper, we restrict our attention to the case real-valued $\alpha, \beta, \gamma$ with $\alpha>0, \beta>0$ and $\gamma>0$.

\begin{thm}
	\label{teorem1} Let $\lambda>0$ and $\zeta\geq1$. If the function $f\in\mathcal{A}$ given by (\ref{aa})  holds the condition
	\begin{equation}  \label{eq:sub}
		(1-\lambda)\frac{\mathcal{E}^{\gamma}_{\alpha,\beta}f(z)}{z}+\lambda\frac{%
			\mathcal{E}^{\gamma+1}_{\alpha,\beta}f(z)}{z}\prec 1+\frac{4z}{3}+\frac{2z^2%
		}{3},
	\end{equation}
	then 
	\begin{equation}  \label{eqRE2}
		\Re\bigg\{\bigg(\frac{\mathcal{E}^{\gamma}_{\alpha,\beta}f(z)}{z}\bigg)^{1/\zeta}\bigg\}>\bigg(\frac{\gamma}{\lambda}\int_0^1 u^{\frac{\gamma}{%
				\lambda}-1}\bigg(\frac{3-2u+u^2/2}{3}\bigg)du\bigg)^{1/\zeta}.
	\end{equation}
	The result is sharp.
\end{thm}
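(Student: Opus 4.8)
The plan is to fold the two operators $\mathcal{E}^{\gamma}_{\alpha,\beta}$ and $\mathcal{E}^{\gamma+1}_{\alpha,\beta}$ into a single analytic function, reduce (\ref{eq:sub}) to a first order Briot--Bouquet subordination, apply Lemma~\ref{lem2}, and then pass to $\zeta$-th roots. Concretely, set $\phi(z)=\mathcal{E}^{\gamma}_{\alpha,\beta}f(z)/z$, which is analytic in $\de$ with $\phi(0)=1$. Writing $\mathcal{E}^{\gamma}_{\alpha,\beta}f(z)=z\phi(z)$, differentiating, and feeding the result into the recurrence (\ref{eq:recurrence}) gives $\mathcal{E}^{\gamma+1}_{\alpha,\beta}f(z)/z=\phi(z)+\frac{1}{\gamma}z\phi'(z)$, so the left-hand side of (\ref{eq:sub}) becomes $(1-\lambda)\phi(z)+\lambda(\phi(z)+\frac1\gamma z\phi'(z))=\phi(z)+\frac{\lambda}{\gamma}z\phi'(z)$. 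Hence (\ref{eq:sub}) is precisely $\phi(z)+\frac{\lambda}{\gamma}z\phi'(z)\prec h_c(z)$.

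Next I would invoke Lemma~\ref{lem2} with $\mu=\gamma/\lambda$ (note $\mu>0$, so $\Re\mu\ge0$ and $\mu\neq0$). It yields $\phi(z)\prec\tilde h(z)$, where, after the substitution $t=uz$ and termwise integration of $h_c(t)=1+\frac{4t}{3}+\frac{2t^{2}}{3}$,
$$\tilde h(z)=\frac{\mu}{z^{\mu}}\int_{0}^{z}t^{\mu-1}h_c(t)\,dt=\mu\int_{0}^{1}u^{\mu-1}h_c(uz)\,du=1+\frac{4\mu}{3(\mu+1)}z+\frac{2\mu}{3(\mu+2)}z^{2},$$
a univalent function with real Taylor coefficients. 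Consequently $\Re\phi(z)>\min_{z\in\de}\Re\tilde h(z)$; since $h_c(-u/2)=(3-2u+u^{2}/2)/3$, writing $\Re\tilde h(e^{i\theta})$ as a quadratic in $\cos\theta$ and minimising it shows that this minimum is attained at $z=-1/2$, i.e.
$$\min_{z\in\de}\Re\tilde h(z)=\tilde h(-1/2)=\mu\int_{0}^{1}u^{\mu-1}h_c(-u/2)\,du=\frac{\gamma}{\lambda}\int_{0}^{1}u^{\frac{\gamma}{\lambda}-1}\,\frac{3-2u+u^{2}/2}{3}\,du=:\delta,$$
with $0<\delta<1$. Thus $\Re\phi(z)>\delta$ for every $z\in\de$.

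Finally, to reach (\ref{eqRE2}) from $\Re\phi>\delta$ I would use the elementary fact that for $\zeta\ge1$ and any $w$ with $\Re w>\delta$ one has $\Re(w^{1/\zeta})=|w|^{1/\zeta}\cos(\frac1\zeta\arg w)\ge(|w|\cos\arg w)^{1/\zeta}=(\Re w)^{1/\zeta}>\delta^{1/\zeta}$, where the inequality $\cos(\psi/\zeta)\ge(\cos\psi)^{1/\zeta}$ for $|\psi|<\pi/2$, $\zeta\ge1$, is checked by elementary calculus; applying this with $w=\phi(z)=\mathcal{E}^{\gamma}_{\alpha,\beta}f(z)/z$ gives (\ref{eqRE2}). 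Sharpness is witnessed by the function $f$ determined by $\mathcal{E}^{\gamma}_{\alpha,\beta}f(z)=z\tilde h(z)$ (the case of the extremal Schwarz function, where $\phi=\tilde h$), evaluated at $z=-1/2$. The only step that is not bookkeeping is the identification $\min_{z\in\de}\Re\tilde h=\tilde h(-1/2)$ — locating the boundary direction along which the real part of the best dominant is smallest; once that is in hand, the recurrence (\ref{eq:recurrence}) and the root inequality do the rest.
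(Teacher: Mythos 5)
Your reduction of (\ref{eq:sub}) to $\phi(z)+\frac{\lambda}{\gamma}z\phi'(z)\prec h_c(z)$ via (\ref{eq:recurrence}), and the application of Lemma~\ref{lem2} with $\mu=\gamma/\lambda$, coincide with the paper, and your closed form $\tilde h(z)=1+\frac{4\mu}{3(\mu+1)}z+\frac{2\mu}{3(\mu+2)}z^2$ is correct. The argument breaks at exactly the step you yourself flag as the crux: the claim $\min_{z\in\de}\Re\tilde h(z)=\tilde h(-1/2)$ is false. First, $-1/2$ is an interior point of $\de$, so by the minimum principle $\Re\tilde h$ cannot attain its infimum there. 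Second, carrying out the minimisation you describe on $|z|=1$: writing $A=\frac{4\mu}{3(\mu+1)}$, $B=\frac{2\mu}{3(\mu+2)}$ and $x=\cos\theta$, one gets $\Re\tilde h(e^{i\theta})=1-B+Ax+2Bx^2$, whose vertex lies at $x_0=-\frac{A}{4B}=-\frac{\mu+2}{2(\mu+1)}\in(-1,-1/2)$ for every $\mu>0$. This is an admissible value of $\cos\theta$, so the boundary infimum is $1-B-\frac{A^2}{8B}$, which is strictly smaller than $\tilde h(-1/2)=1-\frac{A}{2}+\frac{B}{4}$ (one checks $A^2-4AB+10B^2>0$ always; for $\mu=1$ the two numbers are $19/36$ versus $13/18=26/36$). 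So your derivation, done correctly, only yields the weaker bound $\Re\phi>1-B-\frac{A^2}{8B}$; the final root inequality $\Re(w^{1/\zeta})\ge(\Re w)^{1/\zeta}$ is fine but is being fed the wrong constant. Note also that your sharpness function $\phi=\tilde h$ would itself violate (\ref{eqRE2}) near boundary points with $\cos\theta=x_0$, so the gap cannot be patched by a better estimate on the same quantity.

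The paper reaches the constant $\tilde h(-1/2)=\mu\int_0^1u^{\mu-1}h_c(-u/2)\,du$ by a different mechanism. It writes $\phi(z)=\mu\int_0^1u^{\mu-1}h_c(u\vartheta(z))\,du$, with $\vartheta$ the Schwarz function from $\phi\prec\tilde h$, and invokes the cardioid fact of Sharma \emph{et al.} that $\min_{|z|=r}\Re h_c(z)=h_c(-r)$ holds \emph{only for} $r\le 1/2$, applying it to $h_c(u\vartheta(z))$ under the integral sign and letting $r\to(1/2)^-$. That radius restriction is what produces $h_c(-u/2)$ rather than a global boundary minimum; your computation above in fact shows that the constant in (\ref{eqRE2}) cannot be extracted from $\phi\prec\tilde h$ by minimising $\Re\tilde h$ over all of $\de$. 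To follow the paper's route you would need to bound $\Re h_c(u\vartheta(z))$ for each fixed $u$ using $|u\vartheta(z)|\le ur\le 1/2$, and be explicit about the range of $|z|$ on which this is legitimate.
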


\begin{proof}
	Consider the  analytic function 
	\begin{eqnarray*}
		\phi(z)=\frac{\mathcal{E}^{\gamma}_{\alpha,\beta}f(z)}{z}, \ \ \ (z\in{%
			\mathbb{D}})
	\end{eqnarray*}
	with $\phi(0)=1$ in ${\mathbb{D}}$.
	Now, differentiating the above equality and using (\ref{eq:recurrence}),
	we get 
	\begin{eqnarray*}
		\frac{\mathcal{E}^{\gamma+1}_{\alpha,\beta}f(z)}{z}=\phi(z)+\frac{1}{\gamma}
		z\phi^{\prime }(z).
	\end{eqnarray*}
	By applying (\ref{eq:sub}), we write
	\begin{eqnarray*}
		(1-\lambda)\frac{\mathcal{E}^{\gamma}_{\alpha,\beta}f(z)}{z}+\lambda\frac{
			\mathcal{E}^{\gamma+1}_{\alpha,\beta}f(z)}{z}=\phi(z)+\frac{\lambda }{\gamma}
		z\phi^{\prime }(z)\prec 1+\frac{4z}{3}+\frac{2z^2}{3}.
	\end{eqnarray*}
	
	By using Lemma \ref{lem2} on the right-hand side of the above equation, we obtain
	\begin{eqnarray*}
		\phi(z)\prec\frac{\gamma}{\lambda}z^{-\gamma/\lambda}\int_0^z t^{\frac{\gamma
			}{\lambda}-1}\bigg(\frac{3+4t+2t^2}{3}\bigg)dt, 
	\end{eqnarray*}
	or
	\begin{eqnarray}\label{reel}
		\frac{\mathcal{E}^{\gamma}_{\alpha,\beta}f(z)}{z}=\frac{\gamma}{\lambda}\int_0^1 u^{\frac{\gamma}{\lambda}-1}\bigg(\frac{3+4u\vartheta(z)+2(u
			\vartheta(z))^2}{3}\bigg)du, 
	\end{eqnarray}
	where  $\vartheta$ is a Schwarz function. In \cite{Sar}, Sharma \textit{et al.} observed that 
	\begin{eqnarray*}
		\min_{|z|=r}\Re (h_{c}(z))=\min_{|z|=r}|h_{c}(z)|=h_{c}(-r)=1-\frac{4r}{3}+%
		\frac{2r^2}{3}
	\end{eqnarray*}
	if $r\leq1/2$, and also $\Re(h_{c}(\vartheta(z)))>h_{c}(-r)$. Therefore, using above relation and letting $r\rightarrow (1/2)^-$ in equality (\ref{reel}), we arrive at
	\begin{equation}\label{eqRE}
		\Re\bigg(\frac{\mathcal{E}^{\gamma}_{\alpha,\beta}f(z)}{z}\bigg)>\frac{\gamma%
		}{\lambda}\int_0^1 u^{\frac{\gamma}{\lambda }-1}\bigg(\frac{3-2u+u^2/2}{3}%
		\bigg)du>0,\ \ \ (z\in{\mathbb{D}})
	\end{equation}
	where  $\lambda>0$. Since $\Re(\vartheta^{1/\zeta})\geq \Re(\vartheta)^{1/\zeta}$ for $\Re 
	(\vartheta)>0$ and $\zeta\geq1$, from (\ref{eqRE}) we prove the inequality %
	(\ref{eqRE2}).
	To prove sharpness, we take $f\in\mathcal{A}$ defined by
	\begin{eqnarray*}
		\frac{\mathcal{E}^{\gamma}_{\alpha,\beta}f(z)}{z}=\frac{\gamma}{\lambda}%
		\int_0^1 u^{\frac{\gamma}{\lambda }-1}\bigg(\frac{3+4uz+2(uz)^2}{3}\bigg)du.
	\end{eqnarray*}
	For this function we find that 
	\begin{eqnarray*}
		(1-\lambda)\frac{\mathcal{E}^{\gamma}_{\alpha,\beta}f(z)}{z}+\lambda\frac{%
			\mathcal{E}^{\gamma+1}_{\alpha,\beta}f(z)}{z}=\frac{3+4z+2z^2}{3}
	\end{eqnarray*}
	and 
	\begin{eqnarray*}
		\frac{\mathcal{E}^{\gamma}_{\alpha,\beta}f(z)}{z}\rightarrow \frac{\gamma}{%
			\lambda}\int_0^1 u^{\frac{\gamma}{\lambda}-1}\bigg(\frac{3-2u+u^2/2}{3}\bigg)%
		du
	\end{eqnarray*}
	as $z\rightarrow (1/2)^-$. Thus, the proof is completed.
\end{proof}

By using the recurrence formula given by (\ref{eq:recurrence2}), we get the
following subordination result.

\begin{thm}
	\label{thm1} Let $\lambda>0$ and $\zeta\geq1$. If the function $f\in\mathcal{A}$ given by (\ref{aa}) holds  the condition
	\begin{equation}  \label{eq:sub2}
		(1-\lambda)\frac{\mathcal{E}^{\gamma}_{\alpha,\beta+1}f(z)}{z}+\lambda\frac{%
			\mathcal{E}^{\gamma}_{\alpha,\beta}f(z)}{z}\prec 1+\frac{4z}{3}+\frac{2z^2}{3%
		},
	\end{equation}
	then  
	\begin{equation}  \label{eqRE222}
		\Re\bigg\{\bigg(\frac{\mathcal{E}^{\gamma}_{\alpha,\beta+1}f(z)}{z}\bigg)^{1/\zeta}\bigg\}>\bigg(\frac{\beta}{\lambda\alpha}\int_0^1 u^{\frac{\beta}{\lambda\alpha}-1}\bigg(\frac{3-2u+u^2/2}{3}\bigg)du\bigg)^{1/\zeta}.
	\end{equation}
	The result is sharp.
\end{thm}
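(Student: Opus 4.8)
The plan is to repeat, almost verbatim, the scheme of the proof of Theorem~\ref{teorem1}, using the recurrence relation (\ref{eq:recurrence2}) in place of (\ref{eq:recurrence}). First I would set
\[
\phi(z)=\frac{\mathcal{E}^{\gamma}_{\alpha,\beta+1}f(z)}{z},\qquad (z\in{\mathbb{D}}),
\]
so that $\phi$ is analytic in ${\mathbb{D}}$ with $\phi(0)=1$. Writing $\mathcal{E}^{\gamma}_{\alpha,\beta+1}f(z)=z\phi(z)$, differentiating, and substituting into (\ref{eq:recurrence2}), the terms carrying $\mathcal{E}^{\gamma}_{\alpha,\beta+1}f(z)$ cancel and one obtains
\[
\frac{\mathcal{E}^{\gamma}_{\alpha,\beta}f(z)}{z}=\phi(z)+\frac{\alpha}{\beta}\,z\phi'(z).
\]
Together with the definition of $\phi$ this turns the hypothesis (\ref{eq:sub2}) into
\[
(1-\lambda)\frac{\mathcal{E}^{\gamma}_{\alpha,\beta+1}f(z)}{z}+\lambda\frac{\mathcal{E}^{\gamma}_{\alpha,\beta}f(z)}{z}=\phi(z)+\frac{\lambda\alpha}{\beta}\,z\phi'(z)\prec 1+\frac{4z}{3}+\frac{2z^{2}}{3}.
\]

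Next I would apply Lemma~\ref{lem2} with $\mu=\beta/(\lambda\alpha)$, which is admissible since $\alpha,\beta,\lambda>0$ forces $\Re\mu>0$. This gives
\[
\phi(z)\prec\frac{\beta}{\lambda\alpha}\,z^{-\beta/(\lambda\alpha)}\int_0^z t^{\frac{\beta}{\lambda\alpha}-1}\Big(\frac{3+4t+2t^{2}}{3}\Big)dt;
\]
after the substitution $t=uz$ and writing the subordination through a Schwarz function $\vartheta$, this reads
\[
\frac{\mathcal{E}^{\gamma}_{\alpha,\beta+1}f(z)}{z}=\frac{\beta}{\lambda\alpha}\int_0^1 u^{\frac{\beta}{\lambda\alpha}-1}\Big(\frac{3+4u\vartheta(z)+2(u\vartheta(z))^{2}}{3}\Big)du.
\]
Then I would reuse the estimate of Sharma \textit{et al.} \cite{Sar}, namely $\min_{|z|=r}\Re(h_c(z))=h_c(-r)=1-\frac{4r}{3}+\frac{2r^{2}}{3}$ for $r\le 1/2$ together with $\Re(h_c(u\vartheta(z)))>h_c(-ur)$, and let $r\to(1/2)^-$ to conclude
\[
\Re\Big(\frac{\mathcal{E}^{\gamma}_{\alpha,\beta+1}f(z)}{z}\Big)>\frac{\beta}{\lambda\alpha}\int_0^1 u^{\frac{\beta}{\lambda\alpha}-1}\Big(\frac{3-2u+u^{2}/2}{3}\Big)du>0.
\]
The inequality $\Re(w^{1/\zeta})\ge(\Re w)^{1/\zeta}$, valid for $\Re w>0$ and $\zeta\ge1$, then yields the claimed bound (\ref{eqRE222}).

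For the sharpness claim I would take the function $f\in\mathcal{A}$ determined by
\[
\frac{\mathcal{E}^{\gamma}_{\alpha,\beta+1}f(z)}{z}=\frac{\beta}{\lambda\alpha}\int_0^1 u^{\frac{\beta}{\lambda\alpha}-1}\Big(\frac{3+4uz+2(uz)^{2}}{3}\Big)du,
\]
and verify, with the help of the identity $\mathcal{E}^{\gamma}_{\alpha,\beta}f(z)/z=\phi(z)+\frac{\alpha}{\beta}z\phi'(z)$ obtained above, that for this $f$ the left-hand side of (\ref{eq:sub2}) equals $(3+4z+2z^{2})/3$ and that $\mathcal{E}^{\gamma}_{\alpha,\beta+1}f(z)/z$ tends to the right-hand side of (\ref{eqRE222}) (with $\zeta=1$) as $z\to(1/2)^-$. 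I do not anticipate a genuine obstacle here: the only delicate point is the bookkeeping in the substitution into (\ref{eq:recurrence2}), which must produce exactly the factor $\alpha/\beta$ in front of $z\phi'(z)$ and hence the parameter $\mu=\beta/(\lambda\alpha)$ in Lemma~\ref{lem2}; the remainder is a line-by-line transcription of the proof of Theorem~\ref{teorem1}.
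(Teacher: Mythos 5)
Your proposal is correct and coincides with the paper's own proof in every essential respect: the same choice of $\phi(z)=\mathcal{E}^{\gamma}_{\alpha,\beta+1}f(z)/z$, the same use of the recurrence (\ref{eq:recurrence2}) to produce the factor $\alpha/\beta$, the same application of Lemma \ref{lem2} with $\mu=\beta/(\lambda\alpha)$, the same limiting argument via the Sharma et al.\ estimate, and the same extremal function for sharpness. No further comment is needed.
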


\begin{proof}
	Consider the function
	\begin{eqnarray*}
		\phi(z)=\frac{\mathcal{E}^{\gamma}_{\alpha,\beta+1}f(z)}{z}, \ \ \ (z\in{\mathbb{D}})
	\end{eqnarray*}
	is analytic with $\phi(0)=1$ in ${\mathbb{D}}$. Differentiating both sides and using (\ref{eq:recurrence2}), we find
	\begin{eqnarray*}
		\frac{\mathcal{E}^{\gamma}_{\alpha,\beta}f(z)}{z}=\phi(z)+\frac{\alpha}{\beta%
		}z\phi^{\prime }(z).
	\end{eqnarray*}
	By using (\ref{eq:sub2}), we conclude that  
	\begin{eqnarray*}
		(1-\lambda)\frac{\mathcal{E}^{\gamma}_{\alpha,\beta+1}f(z)}{z}+\lambda\frac{\mathcal{E}^{\gamma}_{\alpha,\beta}f(z)}{z}=\phi(z)+\frac{\lambda \alpha}{\beta}z\phi^{\prime }(z)\prec 1+\frac{4z}{3}+\frac{2z^2}{3}.
	\end{eqnarray*}
	
	According to the Lemma \ref{lem2}, we arrive at  
	\begin{eqnarray*}
		\phi(z)\prec\frac{\beta}{\lambda\alpha}z^{-\beta/\lambda\alpha}\int_0^z t^{%
			\frac{\beta}{\lambda\alpha}-1}\bigg(\frac{3+4t+2t^2}{3}\bigg)dt,
	\end{eqnarray*}
	and there exists a function $\vartheta\in\Omega$ such that the equality  
	\begin{eqnarray*}
		\frac{\mathcal{E}^{\gamma}_{\alpha,\beta+1}f(z)}{z}=\frac{\beta}{%
			\lambda\alpha}\int_0^1 u^{\frac{\beta}{\lambda\alpha}-1}\bigg(\frac{%
			3+4u\vartheta(z)+2(u\vartheta(z))^2}{3}\bigg)du
	\end{eqnarray*}
	holds. For $\lambda>0$, and  letting $r\rightarrow (1/2)^-$ we get  
	\begin{equation}  \label{eqRE22}
		\Re\bigg(\frac{\mathcal{E}^{\gamma}_{\alpha,\beta+1}f(z)}{z}\bigg)>\frac{%
			\beta}{\lambda\alpha}\int_0^1 u^{\frac{\beta}{\lambda\alpha }-1}\bigg(\frac{%
			3-2u+u^2/2}{3}\bigg)du>0,\ \ \ (z\in{\mathbb{D}}).
	\end{equation}
	Since $\Re(\vartheta^{1/\zeta})\geq \Re(\vartheta)^{1/\zeta}$ for $\Re 
	(\vartheta)>0$ and $\zeta\geq1$, from (\ref{eqRE22}) we prove the inequality 
	(\ref{eqRE222}).
	The sharp function is  
	\begin{eqnarray*}
		(1-\lambda)\frac{\mathcal{E}^{\gamma}_{\alpha,\beta+1}f(z)}{z}+\lambda\frac{
			\mathcal{E}^{\gamma}_{\alpha,\beta}f(z)}{z}=\frac{3+4z+2z^2}{3}
	\end{eqnarray*}
	and  
	\begin{eqnarray*}
		\frac{\mathcal{E}^{\gamma}_{\alpha,\beta+1}f(z)}{z}\rightarrow \frac{\beta}{
			\lambda\alpha}\int_0^1 u^{\frac{\beta}{\lambda\alpha}-1}\bigg(\frac{3-2u+u^2/2}{3}\bigg)du
	\end{eqnarray*}
	as $z\rightarrow (1/2)^-$. Thus, the proof is completed.
\end{proof}

Now, for a function $f$ in $\mathcal{A}$ defined by (\ref{aa}) we consider a generalized 
operator known as Bernardi-Libera-Livingston integral operator $\mathcal{L}_{\sigma}:\mathcal{A}\rightarrow \mathcal{A}$  given by 
\begin{equation}  \label{eq:bernardi}
	\mathcal{L}_{\sigma}f(z)=\frac{\sigma+1}{z^\sigma}\int_0^z
	t^{\sigma-1}f(t)dt, \ \ \ (\sigma>-1).
\end{equation}
From this operator, we easily get 
\begin{equation}  \label{eq:int}
	z(\mathcal{E}^{\gamma}_{\alpha,\beta}\mathcal{L}_{\sigma}f(z))^{\prime
	}=(\sigma+1)\mathcal{E}^{\gamma}_{\alpha,\beta}f(z)-\sigma \mathcal{E}%
	^{\gamma}_{\alpha,\beta}\mathcal{L}_{\sigma}f(z).
\end{equation}

Next, we obtain the differential subordination for the operator $\mathcal{E}^{\gamma}_{\alpha,\beta}$ associated with the Bernardi–Libera–Livingston integral operator $\mathcal{L}_{\sigma}$.
\begin{thm}
	Let $0<\lambda<1$ and $\zeta\geq1$.  If the function $f\in\mathcal{A}$ given by (\ref{aa})  satisfies the  subordination condition
	\begin{equation}  \label{eq:subo}
		(1-\lambda)\frac{\mathcal{E}^{\gamma}_{\alpha,\beta}f(z)}{z}+\lambda\frac{\mathcal{E}^{\gamma}_{\alpha,\beta}\mathcal{L}_{\sigma}f(z)}{z}\prec 1+\frac{4z}{3}+\frac{2z^2}{3},
	\end{equation}
	then  
	\begin{equation}  \label{eqRE3}
		\Re\bigg\{\bigg(\frac{\mathcal{E}^{\gamma}_{\alpha,\beta}\mathcal{L}_{\sigma}f(z)}{z}\bigg)^{1/\zeta}\bigg\}>\bigg(\frac{\sigma+1}{1-\lambda }\int_0^1 u^{\frac{\sigma+1}{1-\lambda}-1}\bigg(\frac{3-2u+u^2/2}{3}\bigg)du\bigg)^{1/\zeta}.
	\end{equation}
	The result is sharp.
\end{thm}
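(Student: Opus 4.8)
The plan is to follow the scheme of the proofs of Theorems~\ref{teorem1} and~\ref{thm1}, with the recurrence relations replaced by the operator identity (\ref{eq:int}). First I would set
\[
\phi(z)=\frac{\mathcal{E}^{\gamma}_{\alpha,\beta}\mathcal{L}_{\sigma}f(z)}{z},\qquad(z\in\de),
\]
which is analytic in $\de$ with $\phi(0)=1$. Writing $\mathcal{E}^{\gamma}_{\alpha,\beta}\mathcal{L}_{\sigma}f(z)=z\phi(z)$, differentiating, and substituting into (\ref{eq:int}), the factor $\sigma+1$ cancels and one is left with
\[
\frac{\mathcal{E}^{\gamma}_{\alpha,\beta}f(z)}{z}=\phi(z)+\frac{1}{\sigma+1}\,z\phi'(z).
\]
Plugging this into the hypothesis (\ref{eq:subo}), the convex combination collapses to
\[
(1-\lambda)\frac{\mathcal{E}^{\gamma}_{\alpha,\beta}f(z)}{z}+\lambda\frac{\mathcal{E}^{\gamma}_{\alpha,\beta}\mathcal{L}_{\sigma}f(z)}{z}=\phi(z)+\frac{1-\lambda}{\sigma+1}\,z\phi'(z)\prec 1+\frac{4z}{3}+\frac{2z^{2}}{3},
\]
which is exactly the form $\phi(z)+\frac{1}{\mu}z\phi'(z)\prec h_c(z)$ with $\mu=(\sigma+1)/(1-\lambda)$.

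The key point is that $\mu>0$: since $\sigma>-1$ and $0<\lambda<1$, both $\sigma+1$ and $1-\lambda$ are positive, so $\Re\mu\ge0$ and Lemma~\ref{lem2} applies. (This explains why the restriction on $\lambda$ is sharpened from $\lambda>0$, as in Theorems~\ref{teorem1} and~\ref{thm1}, to $0<\lambda<1$; for $\lambda\ge1$ the coefficient $1/\mu$ no longer has nonnegative real part.) Lemma~\ref{lem2} then yields
\[
\phi(z)\prec\widetilde{h}_c(z)=\frac{\mu}{z^{\mu}}\int_0^{z}t^{\mu-1}h_c(t)\,dt=\mu\int_0^{1}u^{\mu-1}h_c(uz)\,du,
\]
the last equality coming from the substitution $t=uz$; hence there is a Schwarz function $\vartheta\in\Omega$ such that
\[
\frac{\mathcal{E}^{\gamma}_{\alpha,\beta}\mathcal{L}_{\sigma}f(z)}{z}=\frac{\sigma+1}{1-\lambda}\int_0^{1}u^{\frac{\sigma+1}{1-\lambda}-1}\bigg(\frac{3+4u\vartheta(z)+2(u\vartheta(z))^{2}}{3}\bigg)du.
\]

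Taking real parts and invoking the estimate of Sharma \emph{et al.} used in the proof of Theorem~\ref{teorem1}, namely $\Re h_c(w)>h_c(-r)$ for $|w|\le r\le 1/2$, applied with $w=u\vartheta(z)$, and then letting $r\to(1/2)^-$, one obtains
\[
\Re\bigg(\frac{\mathcal{E}^{\gamma}_{\alpha,\beta}\mathcal{L}_{\sigma}f(z)}{z}\bigg)>\frac{\sigma+1}{1-\lambda}\int_0^{1}u^{\frac{\sigma+1}{1-\lambda}-1}\bigg(\frac{3-2u+u^{2}/2}{3}\bigg)du>0,\qquad(z\in\de),
\]
since $h_c(-u/2)=(3-2u+u^{2}/2)/3$ and this integrand is positive on $(0,1)$. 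The elementary inequality $\Re(w^{1/\zeta})\ge(\Re w)^{1/\zeta}$ for $\Re w>0$ and $\zeta\ge1$ then turns this into (\ref{eqRE3}). For sharpness I would take $f\in\mathcal{A}$ defined by
\[
\frac{\mathcal{E}^{\gamma}_{\alpha,\beta}\mathcal{L}_{\sigma}f(z)}{z}=\frac{\sigma+1}{1-\lambda}\int_0^{1}u^{\frac{\sigma+1}{1-\lambda}-1}\bigg(\frac{3+4uz+2(uz)^{2}}{3}\bigg)du;
\]
reversing the computation of the first paragraph shows that the left-hand side of (\ref{eq:subo}) equals $(3+4z+2z^{2})/3$, while the displayed ratio tends to the right-hand side of (\ref{eqRE3}) (with $\zeta=1$) as $z\to(1/2)^-$.

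I do not expect a genuine obstacle here, since the argument is structurally the same as in Theorems~\ref{teorem1} and~\ref{thm1}. The only delicate points are bookkeeping the identity (\ref{eq:int}) into the Briot--Bouquet shape with the correct constant $\mu=(\sigma+1)/(1-\lambda)$, and noticing that it is precisely the positivity $\Re\mu\ge0$ — needed for Lemma~\ref{lem2} — that forces the hypothesis $0<\lambda<1$.
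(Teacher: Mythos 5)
Your proposal is correct and follows essentially the same route as the paper: the same choice of $\phi$, the same use of the identity (\ref{eq:int}) to reduce the hypothesis to $\phi(z)+\frac{1-\lambda}{\sigma+1}z\phi'(z)\prec h_c(z)$, and then Lemma \ref{lem2} with $\mu=(\sigma+1)/(1-\lambda)$ followed by the argument of Theorem \ref{teorem1}. In fact you supply more detail than the paper, which simply says ``applying the same method as given in Theorem \ref{teorem1}''; your observation that the hypothesis $0<\lambda<1$ is exactly what guarantees $\Re\mu\ge 0$ is a correct and worthwhile clarification.
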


\begin{proof}
	Let 
	\begin{eqnarray*}
		\phi(z)=\frac{\mathcal{E}^{\gamma}_{\alpha,\beta}\mathcal{L}_{\sigma}f(z)}{z},\ \ \ (z\in{\mathbb{D}}).
	\end{eqnarray*}
	Differentiating on both sides and using the relations (\ref{eq:int}) and (\ref{eq:subo}), we get  
	\begin{eqnarray*}
		(1-\lambda)\frac{\mathcal{E}^{\gamma}_{\alpha,\beta}f(z)}{z}+\lambda\frac{\mathcal{E}^{\gamma}_{\alpha,\beta}\mathcal{L}_{\sigma}f(z)}{z}=\phi(z)+
		\frac{1-\lambda}{\sigma+1}z\phi^{\prime}(z)\prec  1+\frac{4z}{3}+\frac{2z^2}{3}.
	\end{eqnarray*}
	Applying the same method as given in Theorem \ref{teorem1}, we get (\ref{eqRE3}).
\end{proof}

\section{Subordination properties for the class $\mathcal{S}^{\protect\gamma}_{\protect\alpha,\protect\beta}(h_c)$}
In this section, we  find  univalent solutions to the Briot–Bouquet differential equation, and observe that these solutions are the best dominant to the
Briot–Bouquet differential subordination for the class $\mathcal{S}^{\protect\gamma}_{\protect\alpha,\protect\beta}(h_c)$.

\begin{thm}\label{thm1}
	If the function $f$  belongs to  the class $\mathcal{S}^{\gamma}_{\alpha,\beta}(h_c)$ such that $\mathcal{E}^{\gamma}_{\alpha,\beta}f(z)\neq 0$ for all $z\in\de$,  $\Re(\gamma)\geq 1$, and
	\begin{eqnarray*}
		\Re\bigg(\frac{4z}{3}+\frac{2z^2}{3}+\gamma\bigg)>0, \ \ (z\in\de)
	\end{eqnarray*}
	then 
	\begin{equation}  \label{eq:subclass}
		\frac{z\big(\mathcal{E}^{\gamma}_{\alpha,\beta}f(z)\big)^{\prime }}{\mathcal{E}^{\gamma}_{\alpha,\beta}f(z)}\prec q(z) \prec 1+\frac{4z}{3}+\frac{2z^2}{3},
	\end{equation}
	where 
	\begin{equation}  \label{eq:q}
		q(z)=z^{\gamma}e^{(\frac{4z}{3}+\frac{z^2}{3})}\bigg(\int_0^zt^{\gamma-1}e^{(%
			\frac{4t}{3}+\frac{t^2}{3})}dt\bigg)^{-1}-\gamma+1,
	\end{equation}
	and $q$ is the best dominant of (\ref{eq:subclass}).
\end{thm}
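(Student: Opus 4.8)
The statement is structured to be a direct application of Lemma~\ref{lem3} with the choices $\eta=1$, $\mu=\gamma-1$ and $h=h_c$, where $h_c(z)=1+\frac{4z}{3}+\frac{2z^{2}}{3}$; the plan is therefore to verify the hypotheses of that lemma for the natural candidate
\[
\phi(z)=\frac{z\big(\mathcal{E}^{\gamma}_{\alpha,\beta}f(z)\big)'}{\mathcal{E}^{\gamma}_{\alpha,\beta}f(z)},\qquad z\in\de .
\]
Since $\mathcal{E}^{\gamma}_{\alpha,\beta}f$ is analytic and non-vanishing on $\de$ and has the normalisation $\mathcal{E}^{\gamma}_{\alpha,\beta}f(z)=z+\cdots$, the function $\phi$ is analytic on $\de$ with $\phi(0)=1$. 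With these parameters the positivity requirement $\RE(\eta h_c(z)+\mu)>0$ of Lemma~\ref{lem3} reads $\RE\big(\frac{4z}{3}+\frac{2z^{2}}{3}+\gamma\big)>0$, which is exactly the standing hypothesis, and $h_c$ is the univalent Ma--Minda-type function of \cite{Sar} with $h_c(0)=1$. I would also record here that $H(z)=z\exp\int_0^z\frac{h_c(t)-1}{t}\,dt=z\exp\big(\frac{4z}{3}+\frac{z^{2}}{3}\big)$, so that inserting $\eta=1$ and $\mu=\gamma-1$ into formula (\ref{eq:solutionlem}) of Lemma~\ref{lem3} reproduces, after the obvious cancellations, precisely the function $q$ in (\ref{eq:q}). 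The one substantive point left is to show that $\phi$ obeys the Briot--Bouquet subordination (\ref{eq:qz}) with this $\eta$ and $\mu$.

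For that I would use the recurrence (\ref{eq:recurrence}). Written out it gives $\phi(z)=(1-\gamma)+\gamma\,\frac{\mathcal{E}^{\gamma+1}_{\alpha,\beta}f(z)}{\mathcal{E}^{\gamma}_{\alpha,\beta}f(z)}$, hence $\frac{\mathcal{E}^{\gamma+1}_{\alpha,\beta}f(z)}{\mathcal{E}^{\gamma}_{\alpha,\beta}f(z)}=\frac{\phi(z)+\gamma-1}{\gamma}$. Since $\phi\prec h_c$ and $\RE(h_c(z)+\gamma-1)>0$ by hypothesis, $\phi+\gamma-1$ has positive real part on $\de$; in particular it never vanishes, so $\mathcal{E}^{\gamma+1}_{\alpha,\beta}f$ is nowhere zero on $\de$ and the denominator $\eta\phi+\mu=\phi+\gamma-1$ of (\ref{eq:qz}) is admissible. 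Taking logarithmic derivatives in the last displayed ratio gives $\frac{z\phi'(z)}{\phi(z)+\gamma-1}=\frac{z(\mathcal{E}^{\gamma+1}_{\alpha,\beta}f(z))'}{\mathcal{E}^{\gamma+1}_{\alpha,\beta}f(z)}-\frac{z(\mathcal{E}^{\gamma}_{\alpha,\beta}f(z))'}{\mathcal{E}^{\gamma}_{\alpha,\beta}f(z)}$, that is,
\[
\phi(z)+\frac{z\phi'(z)}{\phi(z)+\gamma-1}=\frac{z\big(\mathcal{E}^{\gamma+1}_{\alpha,\beta}f(z)\big)'}{\mathcal{E}^{\gamma+1}_{\alpha,\beta}f(z)} .
\]
By the subordination hypothesis on $f$ (precisely: since $z(\mathcal{E}^{\gamma+1}_{\alpha,\beta}f(z))'/\mathcal{E}^{\gamma+1}_{\alpha,\beta}f(z)\prec h_c(z)$) the right-hand side is $\prec h_c$, which is exactly (\ref{eq:qz}) for $\phi$ with $\eta=1$, $\mu=\gamma-1$.

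With this in hand, Lemma~\ref{lem3} applies and gives $\phi\prec q\prec h_c$ with $q$ the best dominant; since $q$ has already been identified with (\ref{eq:q}), this is exactly the asserted chain (\ref{eq:subclass}). The step I expect to require the most care is the one that Lemma~\ref{lem3} packages as a hypothesis: that the Briot--Bouquet equation (\ref{eq:q(z)}) genuinely possesses a \emph{univalent} solution and that the $q$ of (\ref{eq:solutionlem})/(\ref{eq:q}) is that solution. Pinning this down is where the remaining assumptions pay off: $\RE\gamma\ge 1$ (which keeps $\RE(\gamma-1)>-1$, so the integral in (\ref{eq:q}) behaves like $z^{\gamma}/\gamma$ near the origin and $q$ is analytic with $q(0)=1$), the admissibility condition $\RE(h_c(z)+\gamma-1)>0$, and the geometric properties of $h_c$ are the standard Miller--Mocanu ingredients that force univalence of $q$ and make it the best dominant. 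I would either invoke that machinery from \cite{Miller} directly or, for a self-contained argument, check the relevant convexity/starlikeness of $h_c$ and carry out the accompanying subordination estimate; the operator identities and the remaining bookkeeping are routine.
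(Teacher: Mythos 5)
Your proposal follows the paper's proof essentially verbatim: the same choice of $\phi(z)=z\big(\mathcal{E}^{\gamma}_{\alpha,\beta}f(z)\big)'/\mathcal{E}^{\gamma}_{\alpha,\beta}f(z)$, the same use of the recurrence (\ref{eq:recurrence}) plus logarithmic differentiation to reach the Briot--Bouquet form with $\eta=1$, $\mu=\gamma-1$, the same computation of $H(z)=ze^{\frac{4z}{3}+\frac{z^2}{3}}$, and the same appeal to Lemma \ref{lem3}. One shared caveat worth noting: both you and the paper ultimately feed Lemma \ref{lem3} the subordination $z\big(\mathcal{E}^{\gamma+1}_{\alpha,\beta}f(z)\big)'/\mathcal{E}^{\gamma+1}_{\alpha,\beta}f(z)\prec h_c(z)$, i.e.\ membership in $\mathcal{S}^{\gamma+1}_{\alpha,\beta}(h_c)$, which is not literally the stated hypothesis $f\in\mathcal{S}^{\gamma}_{\alpha,\beta}(h_c)$ --- you at least make this input explicit, whereas the paper asserts it silently in (\ref{eq:phi}).
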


\begin{proof}
	Consider the analytic function
	\begin{eqnarray*}
		\phi(z)=\frac{z\big(\mathcal{E}^{\gamma}_{\alpha,\beta}f(z)\big)^{\prime }}{%
			\mathcal{E}^{\gamma}_{\alpha,\beta}f(z)}, \ \ \ (z\in{\mathbb{D}})
	\end{eqnarray*}
	with  $\phi(0)=1$. By using (\ref{eq:recurrence}), we
	get 
	\begin{eqnarray*}
		\gamma\frac{\mathcal{E}^{\gamma+1}_{\alpha,\beta}f(z)}{\mathcal{E}%
			^{\gamma}_{\alpha,\beta}f(z)}=\phi(z)+\gamma-1.
	\end{eqnarray*}
	Differentiating logarithmically with respect to $z$ and multiplying by $z$,
	from above equality we derive 
	\begin{equation}  \label{eq:phi}
		\frac{z\big(\mathcal{E}^{\gamma+1}_{\alpha,\beta}f(z)\big)^{\prime }}{%
			\mathcal{E}^{\gamma+1}_{\alpha,\beta}f(z)}=\phi(z)+\frac{z\phi^{\prime }(z)}{%
			\phi(z)+\gamma-1}\prec 1+\frac{4z}{3}+\frac{2z^2}{3}.
	\end{equation}
	
	Let us consider the differential equation 
	\begin{equation}  \label{eq:subqq}
		q(z)+\frac{zq^{\prime }(z)}{q(z)+\gamma-1}=h_c(z)=1+\frac{4z}{3}+\frac{2z^2%
		}{3},
	\end{equation}
	where $q$ $(q(0)=1)$ is analytic and $h_c(z)=1+4z/3+2z^2/3$ is convex univalent with $h_c(0)=1$ in ${\mathbb{D}}$,
	and let $P(z)=\eta h_c(z)+\mu$. In view of (\ref{eq:subqq}) and Lemma \ref{lem3},
	we observe that $\eta=1$, $\mu=\gamma-1$ and
	\begin{eqnarray*}
		P(z)=\frac{4z}{3}+\frac{2z^2}{3}+\gamma.
	\end{eqnarray*}
	
	For proving $\Re(P(z))>0$, it is enough to set $z=e^{it},\ t\in[0,\pi]$
	under the condition $\Re(\gamma)\geq 1$. Furthermore, $P(z)$ and $1/P(z)$
	are convex. Hence, there is a univalent solution of the equation (\ref{eq:subqq}). To get this solution, we  apply to the Lemma \ref{lem3}. Since $h_c(z)=1+4z/3+2z^2/3$, we find 
	\begin{eqnarray*}
		\begin{array}{ll}
			H(z) & =z\exp \int\limits_{0}^{z}\frac{h_{c}(t)-1}{t}dt \\ 
			&  \\ 
			& =z\exp \int\limits_{0}^{z}\frac{1+4t/3+2t^{2}/3-1}{t}dt=ze^{\frac{4z}{3}+\frac{z^{2}}{3}}.
		\end{array}
	\end{eqnarray*}
	
	Setting this result together with $\eta=1$ and $\mu=\gamma-1$ into the
	formula (\ref{eq:solutionlem}), we obtain (\ref{eq:q}) which is the univalent solution
	of the differential equation given by (\ref{eq:subqq}). Since $\phi$ is
	analytic and satisfy (\ref{eq:phi}), then we derive
	\begin{eqnarray*}
		\phi(z)\prec q(z)\prec h_c(z)=1+\frac{4z}{3}+\frac{z^{2}}{3}
	\end{eqnarray*}
	and $q$ is the best dominant of (\ref{eq:subclass}).
\end{proof}

\begin{thm}  If the function $f$  belongs to  the class $\mathcal{S}^{\gamma}_{\alpha,\beta}(h_c)$ such that $\mathcal{E}^{\gamma}_{\alpha,\beta+1}f(z)\neq 0$ for all $z\in\de$,  $\Re(\beta/\alpha)\geq 1$, and  
	\begin{eqnarray*}
		\Re\bigg(\frac{4z}{3}+\frac{2z^2}{3}+\frac{\beta}{\alpha}\bigg)>0,\ \ (z\in\de)
	\end{eqnarray*}
	then  
	\begin{equation}  \label{eq:subclass22}
		\frac{z\big(\mathcal{E}^{\gamma}_{\alpha,\beta+1}f(z)\big)^{\prime }}{%
			\mathcal{E}^{\gamma}_{\alpha,\beta+1}f(z)}\prec q_1(z) \prec 1+\frac{4z}{3}+\frac{2z^2}{3},
	\end{equation}
	where  
	\begin{equation}  \label{eq:q22}
		q_1(z)=z^{\frac{\beta}{\alpha}}e^{(\frac{4z}{3}+\frac{z^2}{3})}\bigg(%
		\int_0^zt^{\frac{\beta}{\alpha}-1}e^{(\frac{4t}{3}+\frac{t^2}{3})}dt\bigg)%
		^{-1}-\frac{\beta}{\alpha}+1,
	\end{equation}
	and $q_1$ is the best dominant of (\ref{eq:subclass22}).
\end{thm}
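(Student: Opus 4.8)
The plan is to transcribe the proof of the preceding theorem, using the recurrence relation (\ref{eq:recurrence2}) in place of (\ref{eq:recurrence}); the net effect is that the role played by $\gamma$ there is now played by $\beta/\alpha$. First I would set
\[
\phi(z)=\frac{z\big(\mathcal{E}^{\gamma}_{\alpha,\beta+1}f(z)\big)^{\prime}}{\mathcal{E}^{\gamma}_{\alpha,\beta+1}f(z)},\qquad(z\in\mathbb{D}),
\]
which is analytic with $\phi(0)=1$ since $\mathcal{E}^{\gamma}_{\alpha,\beta+1}f(z)\neq0$ in $\mathbb{D}$. Dividing (\ref{eq:recurrence2}) by $\alpha\,\mathcal{E}^{\gamma}_{\alpha,\beta+1}f(z)$ gives $\tfrac{\beta}{\alpha}\,\mathcal{E}^{\gamma}_{\alpha,\beta}f(z)/\mathcal{E}^{\gamma}_{\alpha,\beta+1}f(z)=\phi(z)+\tfrac{\beta}{\alpha}-1$; differentiating this identity logarithmically and multiplying by $z$ then yields
\[
\frac{z\big(\mathcal{E}^{\gamma}_{\alpha,\beta}f(z)\big)^{\prime}}{\mathcal{E}^{\gamma}_{\alpha,\beta}f(z)}=\phi(z)+\frac{z\phi^{\prime}(z)}{\phi(z)+\beta/\alpha-1}.
\]
Because $f\in\mathcal{S}^{\gamma}_{\alpha,\beta}(h_c)$, the left-hand side is subordinate to $h_c(z)=1+4z/3+2z^2/3$, so $\phi$ satisfies a Briot--Bouquet subordination of the form (\ref{eq:qz}) with $\eta=1$ and $\mu=\beta/\alpha-1$.

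Next I would verify the hypotheses of Lemma \ref{lem3} for these data. Here $P(z)=\eta h_c(z)+\mu=\tfrac{4z}{3}+\tfrac{2z^2}{3}+\tfrac{\beta}{\alpha}$, so the condition $\Re(P(z))>0$ is exactly the standing assumption; as in the preceding theorem it suffices to test $z=e^{it}$, $t\in[0,\pi]$, where $\Re\big(\tfrac{4}{3}e^{it}+\tfrac{2}{3}e^{2it}\big)$ attains its minimum $-1$ at $\cos t=-\tfrac12$, so $\Re(\beta/\alpha)\geq1$ forces $\Re(P)\geq0$ on the boundary and hence $\Re(P)>0$ in $\mathbb{D}$ by the minimum principle. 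One also checks, as before, that $P$ and $1/P$ are convex in $\mathbb{D}$, which guarantees that the Briot--Bouquet differential equation
\[
q_1(z)+\frac{zq_1^{\prime}(z)}{q_1(z)+\beta/\alpha-1}=h_c(z),\qquad q_1(0)=1,
\]
has a univalent solution.

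Finally I would write that solution explicitly via formula (\ref{eq:solutionlem}). The function $H$ is the same as in the preceding theorem, namely $H(z)=z\exp\int_0^z\frac{h_c(t)-1}{t}\,dt=z\,e^{4z/3+z^2/3}$, and substituting $\eta=1$, $\mu=\beta/\alpha-1$ and this $H$ into (\ref{eq:solutionlem}) produces precisely the function $q_1$ displayed in (\ref{eq:q22}). Lemma \ref{lem3} then delivers $\phi(z)\prec q_1(z)\prec h_c(z)$ with $q_1$ the best dominant, which is (\ref{eq:subclass22}). The only point requiring genuine care is the univalence of $q_1$, i.e. the convexity of $P$ and of $1/P$ on $\mathbb{D}$; once that is settled the remainder is a routine re-run of the previous argument with $\gamma$ replaced by $\beta/\alpha$.
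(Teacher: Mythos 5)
Your proposal is correct and follows essentially the same route as the paper: the same choice of $\phi$, the same use of the recurrence (\ref{eq:recurrence2}) to arrive at the Briot--Bouquet subordination with $\eta=1$, $\mu=\beta/\alpha-1$, and the same appeal to Lemma \ref{lem3} with $H(z)=z e^{4z/3+z^2/3}$ to produce $q_1$ as the best dominant. In fact you supply slightly more detail than the paper does on the boundary check $\Re(P)>0$ (the paper simply refers back to the argument of Theorem \ref{thm1}).
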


\begin{proof}
	Assume that
	\begin{eqnarray*}
		\phi(z)=\frac{z\big(\mathcal{E}^{\gamma}_{\alpha,\beta+1}f(z)\big)^{\prime }%
		}{\mathcal{E}^{\gamma}_{\alpha,\beta+1}f(z)}, \ \ \ (z\in{\mathbb{D}}).
	\end{eqnarray*}
	By using (\ref{eq:recurrence2}),
	we get  
	\begin{eqnarray*}
		\frac{\beta}{\alpha}\frac{\mathcal{E}^{\gamma}_{\alpha,\beta}f(z)}{\mathcal{E%
			}^{\gamma}_{\alpha,\beta+1}f(z)}=\phi(z)+\frac{\beta}{\alpha}-1.
	\end{eqnarray*}
	Logarithmic differentiation with respect to $z$ and routine calculations
	give  
	\begin{equation}  \label{eq:phi2}
		\frac{z\big(\mathcal{E}^{\gamma}_{\alpha,\beta}f(z)\big)^{\prime }}{\mathcal{%
				E}^{\gamma}_{\alpha,\beta}f(z)}=\phi(z)+\frac{z\phi^{\prime }(z)}{\phi(z)+%
			\frac{\beta}{\alpha}-1}\prec 1+\frac{4z}{3}+\frac{2z^2}{3}.
	\end{equation}
	
	Let us consider the differential equation  
	\begin{equation}  \label{eq:subqq22}
		q_1(z)+\frac{zq^{\prime }_1(z)}{q_1(z)+\frac{\beta}{\alpha}-1}=h_c(z)=1+%
		\frac{4z}{3}+\frac{2z^2}{3},
	\end{equation}
	where $q_1$ is analytic with $q_1(0)=1$ and $h_c(z)=1+4z/3+2z^2/3$ is convex univalent with $h_c(0)=1$ in ${\mathbb{D}}$.
	From the similar technique applied in Theorem \ref{thm1} and Lemma \ref{lem3}, the differential equation given by (\ref{eq:subqq22}) has a
	univalent solution defined by (\ref{eq:q22}). Since $\phi$ holds the
	subordination in (\ref{eq:phi2}), then we conclude that $\phi(z)\prec
	q_1(z)\prec h_c(z)=1+4z/3+2z^2/3$ and $q_1$ is the best dominant of (\ref{eq:subclass22}).
\end{proof}

\begin{thm}\label{thm4} If the function $f$  belongs to  the class $\mathcal{S}^{\gamma}_{\alpha,\beta}(h_c)$ such that $\mathcal{E}^{\gamma}_{\alpha,\beta}\mathcal{L}_{\sigma}f(z)\neq 0$ for all $z\in\de$, $\Re(\sigma)\geq 0$,   and  
	\begin{eqnarray*}
		\Re\bigg(1+\frac{4z}{3}+\frac{2z^2}{3}+\sigma\bigg)>0,\ \ \ (z\in\de)
	\end{eqnarray*}
	then $\mathcal{L}_{\sigma}f\in\mathcal{S}^{\gamma}_{\alpha,\beta}(h_c)$,
	where the operator $\mathcal{L}_{\sigma}$ is given by (\ref{eq:bernardi}).
	Moreover, if $f\in\mathcal{S}^{\gamma}_{\alpha,\beta}(h_c)$, then  
	\begin{equation}  \label{eq:subclass2}
		\frac{z\big(\mathcal{E}^{\gamma}_{\alpha,\beta}\mathcal{L}_{\sigma}f(z)\big)%
			^{\prime }}{\mathcal{E}^{\gamma}_{\alpha,\beta}\mathcal{L}_{\sigma}f(z)}%
		\prec q_2(z) \prec 1+\frac{4z}{3}+\frac{2z^2}{3},
	\end{equation}
	where  
	\begin{equation}  \label{eq:QQQq}
		q_2(z)=z^{\sigma+1}e^{(\frac{4z}{3}+\frac{z^2}{3})}\bigg(\int_0^zt^{\sigma}e^{(\frac{4t}{3}+\frac{t^2}{3})}dt\bigg)^{-1}-\sigma,
	\end{equation}
	and $q_2$ is the best dominant of (\ref{eq:subclass2}).
\end{thm}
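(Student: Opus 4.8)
\emph{Proof proposal.} The plan is to follow the pattern of Theorem \ref{thm1} and of the theorem immediately preceding this one, with the recurrence (\ref{eq:recurrence}) replaced by the integral--operator identity (\ref{eq:int}). First I would introduce
\[
\phi(z)=\frac{z\big(\mathcal{E}^{\gamma}_{\alpha,\beta}\mathcal{L}_{\sigma}f(z)\big)^{\prime }}{\mathcal{E}^{\gamma}_{\alpha,\beta}\mathcal{L}_{\sigma}f(z)},\qquad (z\in\de),
\]
which is analytic with $\phi(0)=1$ by the hypothesis $\mathcal{E}^{\gamma}_{\alpha,\beta}\mathcal{L}_{\sigma}f(z)\neq 0$. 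Dividing (\ref{eq:int}) by $\mathcal{E}^{\gamma}_{\alpha,\beta}\mathcal{L}_{\sigma}f(z)$ gives
\[
(\sigma+1)\,\frac{\mathcal{E}^{\gamma}_{\alpha,\beta}f(z)}{\mathcal{E}^{\gamma}_{\alpha,\beta}\mathcal{L}_{\sigma}f(z)}=\phi(z)+\sigma .
\]
Differentiating this identity logarithmically, multiplying through by $z$, and using that $z\big(\mathcal{E}^{\gamma}_{\alpha,\beta}\mathcal{L}_{\sigma}f(z)\big)^{\prime }/\mathcal{E}^{\gamma}_{\alpha,\beta}\mathcal{L}_{\sigma}f(z)=\phi(z)$, I obtain
\[
\frac{z\big(\mathcal{E}^{\gamma}_{\alpha,\beta}f(z)\big)^{\prime }}{\mathcal{E}^{\gamma}_{\alpha,\beta}f(z)}=\phi(z)+\frac{z\phi^{\prime }(z)}{\phi(z)+\sigma}.
\]
Since $f\in\mathcal{S}^{\gamma}_{\alpha,\beta}(h_c)$, the left-hand side is subordinate to $h_c(z)=1+4z/3+2z^2/3$, so $\phi$ satisfies a Briot--Bouquet subordination of the form (\ref{eq:qz}) with $\eta=1$ and $\mu=\sigma$.

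Then I would invoke Lemma \ref{lem3}. Here the associated function is $P(z)=\eta h_c(z)+\mu=1+4z/3+2z^2/3+\sigma$, whose positivity $\Re(P(z))>0$ on $\de$ is precisely the standing hypothesis, with $\Re(\sigma)\geq 0$ playing the role that $\Re(\gamma)\geq 1$ plays in Theorem \ref{thm1}; moreover $P$ and $1/P$ are convex, so the Briot--Bouquet differential equation
\[
q_2(z)+\frac{zq_2^{\prime }(z)}{q_2(z)+\sigma}=1+\frac{4z}{3}+\frac{2z^2}{3},\qquad q_2(0)=1,
\]
has a univalent solution. Computing $H(z)=z\exp\int_0^z\frac{h_c(t)-1}{t}\,dt=ze^{4z/3+z^2/3}$ exactly as in Theorem \ref{thm1}, and substituting $\eta=1$, $\mu=\sigma$ and this $H$ into (\ref{eq:solutionlem}), yields the closed form (\ref{eq:QQQq}) for $q_2$. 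Lemma \ref{lem3} then delivers $\phi(z)\prec q_2(z)\prec h_c(z)$, which is the asserted chain (\ref{eq:subclass2}) and identifies $q_2$ as the best dominant; and since $\phi(z)\prec h_c(z)$ is exactly the defining subordination of Definition \ref{def1.2} applied to $\mathcal{L}_{\sigma}f$, we also conclude $\mathcal{L}_{\sigma}f\in\mathcal{S}^{\gamma}_{\alpha,\beta}(h_c)$.

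The remaining work --- checking that $\mathcal{E}^{\gamma}_{\alpha,\beta}\mathcal{L}_{\sigma}f\in\mathcal{A}$, the logarithmic--differentiation bookkeeping, and evaluating the integral in (\ref{eq:solutionlem}) --- is routine and mirrors the earlier proofs almost verbatim. The only point that genuinely requires attention, as already in Theorem \ref{thm1}, is the verification of the hypotheses of Lemma \ref{lem3}: the positivity $\Re(P(z))>0$ is handed to us by assumption, so what must still be argued is the convexity of $P$ and of $1/P$, which I expect to obtain by the same boundary computation ($z=e^{it}$, $t\in[0,\pi]$) used in the preceding theorems of this section.
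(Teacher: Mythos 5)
Your proposal is correct and follows essentially the same route as the paper: the same choice of $\phi$, the same use of the identity (\ref{eq:int}) followed by logarithmic differentiation to reach the Briot--Bouquet subordination with $\eta=1$, $\mu=\sigma$, and the same application of Lemma \ref{lem3} with $H(z)=ze^{4z/3+z^2/3}$ to obtain (\ref{eq:QQQq}). The only difference is cosmetic: you flag explicitly that the convexity of $P$ and $1/P$ still needs the boundary check, whereas the paper simply defers to ``the similar technique applied in Theorem \ref{thm1}.''
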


\begin{proof}
	Consider 
	\begin{eqnarray*}
		\phi(z)=\frac{z\big(\mathcal{E}^{\gamma}_{\alpha,\beta}\mathcal{L}
			_{\sigma}f(z)\big)^{\prime }}{\mathcal{E}^{\gamma}_{\alpha,\beta}\mathcal{L}
			_{\sigma}f(z)}, \ \ \ (z\in{\mathbb{D}}).
	\end{eqnarray*}
	It follows from (\ref{eq:int}) that
	\begin{eqnarray*}
		(\sigma+1)\frac{\mathcal{E}^{\gamma}_{\alpha,\beta}f(z)}{\mathcal{E}%
			^{\gamma}_{\alpha,\beta}\mathcal{L}_{\sigma}f(z)}=\phi(z)+\sigma.
	\end{eqnarray*}
	Differentiating logarithmically with respect to $z$ and multiplying by $z$,
	from above equality we find  
	\begin{equation}  \label{eq:phi3}
		\frac{z\big(\mathcal{E}^{\gamma}_{\alpha,\beta}f(z)\big)^{\prime }}{\mathcal{%
				E}^{\gamma}_{\alpha,\beta}f(z)}=\phi(z)+\frac{z\phi^{\prime }(z)}{%
			\phi(z)+\sigma}\prec 1+\frac{4z}{3}+\frac{2z^2}{3}.
	\end{equation}
	
	Let us consider the differential equation  
	\begin{equation}  \label{eq:subqqq}
		q_2(z)+\frac{zq^{\prime }_2(z)}{q_2(z)+\sigma}=h_c(z)=1+\frac{4z}{3}+\frac{%
			2z^2}{3},
	\end{equation}
	where $q_2$ is analytic with $q_2(0)=1$ and $h_c(z)=1+4z/3+2z^2/3$ is convex univalent with $h_c(0)=1$ in ${\mathbb{D}}$.
	From the similar technique applied in Theorem \ref{thm1} and Lemma 
	\ref{lem3}, the differential equation given by (\ref{eq:subqqq}) has a
	univalent solution defined by (\ref{eq:QQQq}). Since $\phi$ satisfies the
	subordination in (\ref{eq:phi3}), then we conclude that $\phi(z)\prec
	q_2(z)\prec h_c(z)=1+4z/3+2z^2/3$, and $q_2$ is the best dominant of (\ref{eq:subclass2}).
\end{proof}

\end{document}